\documentclass[12pt]{article}
\topmargin -15mm
\textheight 24truecm
\textwidth 16truecm
\oddsidemargin 5mm
\evensidemargin 5mm
\usepackage{amsfonts, amssymb}
\usepackage{amsmath}
\usepackage{amsthm}

\newtheorem{theorem}{Theorem}[section]

\title{ On the recurrence coefficients of the generalized  little $q$-Laguerre polynomials}

\author{Galina    Filipuk\footnote{Faculty of Mathematics, Informatics and Mechanics, University of Warsaw, Banacha 2, Warsaw 02-097, Poland. Email: filipuk@mimuw.edu.pl}\ \  and Christophe   Smet\footnote{Department of Mathematics, KU Leuven, Celestijnenlaan 200 B box 2400, BE-3001, Leuven,
Belgium. Email: chr.smet@gmail.com} }
\date{}

\begin{document}
\maketitle

\begin{abstract}
In this paper we consider a semi-classical variation of the weight related to the little  q-Laguerre polynomials and obtain a second order second degree discrete equation for the recurrence coefficients in the three-term recurrence relation.
\end{abstract}

{\bf Key words: } orthogonal polynomials, recurrence coefficients, discrete equations.
 
{\bf MSC:} 33C47

\section{Introduction}

\subsection{Orthogonal polynomials}

Orthogonal polynomials appear in many areas of modern mathematics and mathematical physics \cite{Chihara, Ismail} (e.g., approximation theory, stochastic processes, random matrix theory and others). In this paper we are interested in discrete $q$-orthogonal polynomials on an exponential lattice. The orthogonality condition for discrete $q$-orthonormal polynomials is given by
$$
\int_a^b p_k(x) p_n(x) w(x) d_q x= \delta_{k,n},
$$
where the $q$-integral  \cite{GR} is defined by
$$
\int_a^b f(x) d_q x=b(1-q)\sum_{n=0}^{\infty} q^n f(b q^n)-a(1-q)\sum_{n=0}^{\infty}q^n f(a q^n).
$$
Here the weight function $w$ is  supported on the exponential lattice $$\{a q^n,\;b q^n\,|\,n\in\mathbb{N}_0\}$$ and $\delta_{k,n}$ is the Kronecker delta.

The classical examples include little $q$-Laguerre polynomials, which are orthogonal on the exponential lattice $\{q^k\,|\,k\in\mathbb{N}_0\}$ with respect to the weight function $$w(x)=x^{\alpha}(qx;q)_{\infty},\;\;\alpha>-1,\;\;q\in(0,1),$$ where $$(a;q)_{\infty}=\prod_{k=0}^{\infty}(1-a q^k).$$ They can be written in terms of the basic hypergeometric function ${}_2\phi_1$.

One of the main features of orthogonal polynomials is the three-term recurrence relation $$x p_n(x)=a_{n+1}p_{n+1}(x)+b_np_n(x)+a_n p_{n-1}(x).$$ Here $\{p_n(x)\}$ are orthonormal polynomials of degree $n$ and the coefficients $a_n$ and $b_n$ are usually referred to as the recurrence coefficients. They possess a number of important properties. For instance, they can be expressed in terms of determinants containing the moments of the orthogonality measure \cite{Chihara}. Moreover, for classical orthogonal polynomials they are known explicitly. For other, non-classical, polynomials the recurrence coefficients are not known explicitly and sometimes they can be expressed in terms of the solutions of discrete (including $q$-discrete)  or continuous Painlev\'e equations. The Painlev\'e equations in their turn have many remarkable applications in modern  mathematics and mathematical physics (see for instance \cite{Clarkson} and the references therein). There are a few examples of relations of $q$-orthogonal polynomials on an exponential lattice to the $q$-discrete Painlev\'e equations: the weight $$w(x)=(q^4x^4;q^4)_{\infty}$$ on $\{\pm q^k\,|\,k\in\mathbb{N}_0\}$ and ${q\textrm{P}_\textrm{\small{I}}}$ \cite{VanAssche}; the weight $$w(x)=|x|^{\alpha}(q^2x^2;q^2)_{\infty}(cq^2x^2;q^2)_{\infty}$$ on $\{\pm q^k \,|\,k\in\mathbb{N}_0\}$ and ${\alpha q\textrm{P}_\textrm{\small{V}}}$ \cite{Lies, VanAssche}; the weight $$w(x)=x^{\alpha}(q^2x^2;q^2)_{\infty}$$ on $\{ q^k \,|\,k\in\mathbb{N}_0\}$ and ${q\textrm{P}_\textrm{\small{V}}}$ \cite{Lies}. For other examples of relations of the recurrence coefficients for the orthogonal polynomials, not necessarily supported on an exponential lattice, see for instance \cite{BFV, BV, GF+Walter2, Freud, Magnus, Magnus2} and the references therein. One of the methods to derive the nonlinear discrete equations for the recurrence coefficients is by using the ladder operators.

\subsection{Ladder operators}
In the case of discrete $q$-orthogonal polynomials on the exponential lattice, the ladder operators were first
considered in \cite{Ismailq}. 
 We repeat the main statements which we use later on to be self-contained following \cite{Ismailq} and    \cite[Section~1.3]{Lies}.

The $q$-difference operator is given by
$$
(D_q f)(x)=\begin{cases}
\frac{f(x)-f(qx)}{x(1-q)},& x\neq 0,\\
f'(0)&x=0.
\end{cases}
$$

Consider a weight function $w$ on the exponential lattice $\{a q^n,\;b q^n\,|\,n\in\mathbb{N}_0\},$ such that $w(a/q)=w(b/q)=0$ and the sequence of orthonormal polynomials $\{p_n\}$ of degree $n$ with respect to this weight. Ismail \cite{Ismailq} shows that the polynomials satisfy the following relation:
$$D_q p_n(x)=A_n(x)p_{n-1}(x)-B_n(x)p_n(x)$$
with
\begin{equation}\label{An}
A_n(x)=a_n\int_a^{b}\frac{u(qx)-u(y)}{qx-y}p_n(y)p_n(y/q)w(y)d_q y,
\end{equation}
\begin{equation}\label{Bn}
B_n(x)=a_n\int_a^{b}\frac{u(qx)-u(y)}{qx-y}p_n(y)p_{n-1}
(y/q)w(y)d_q y.
\end{equation}
Here the function $u$, called the potential, is defined by the following formula:
\begin{equation}\label{potential}
-u(qx)w(qx)=D_{q}w(x).
\end{equation}
Furthermore, the following relations (compatibility conditions) hold:
\begin{equation}\label{comp1}
B_{n}+B_{n+1}=(x-b_n)\frac{A_n}{a_n}+(q-1)x\,\sum_{j=0}^n \frac{A_j}{a_j}-u(qx),
\end{equation}
\begin{equation}\label{comp2}
a_{n+1}A_{n+1}-a_n^2\frac{A_{n-1}}{a_{n-1}}=(x-b_n)B_{n+1}-(qx-b_n)B_n+1.
\end{equation}
Relations (\ref{comp1}), (\ref{comp2}) are important in deriving nonlinear discrete equations for the recurrence coefficients, which in some cases can be further reduced to ($q$-)discrete Painlev\'e equations.

\section{Main results}

In this paper we study the recurrence coefficients for the weight functions supported on the exponential lattice $\{ q^k \,|\,k\in\mathbb{N}_0\}$ and satisfying the $q$-difference equation (\ref{potential}) with
\begin{equation}\label{pot gen}
u(x)=\frac{k_1 q}{1-q}\frac{1}{x}+\frac{k_2 x+k_3}{1-q},\;\;k_1\neq 0,\;\;k_2\neq 0,
\end{equation}
and conditions $w(0)=w(1/q)=0$. At the end of this section we discuss the existence of such weight functions and consider a few instructive examples. In the following we assume that
the sequence of polynomials $\{p_n\}$
is orthonormal with respect to the weight function with potential (\ref{pot gen}) and hence the orthogonality relation takes the form
$$\int_0^1 p_m(x)p_n(x)w(x)d_q x=\delta_{m,n}.$$

It is straightforward to calculate that
$$\frac{u(qx)-u(y)}{qx-y}=\frac{k_1}{(q-1)xy}+\frac{k_2}{1-q}.$$
Hence,  the expressions (\ref{An}) and (\ref{Bn}) can be computed as follows:
$$A_n(x)=\frac{a_n R_n}{x(1-q)}+\frac{a_n k_2 q^{-n}}{1-q}, $$
$$B_n(x)=\frac{r_n}{(1-q)x},$$
where $$R_n=-k_1\int_0^1  p_n(y)p_{n}(y/q)\frac{w(y)}{y}d_q y,\;\;r_n=-a_n k_1\int_0^1 p_n(y)p_{n-1}(y/q)\frac{w(y)}{y}d_q y$$
and we have used orthogonality in computing $$\int_0^1  p_n(y)p_{n}(y/q)w(y)d_q y =q^{-n},\;\;\int_0^1  p_n(y)p_{n-1}(y/q)w(y)d_q y =0.$$

The compatibility conditions (\ref{comp1}), (\ref{comp2}) give rise to the following system (after comparing the coefficients at the powers of $x$):
\begin{equation}\label{eq733g}
r_{n+1}+r_n=-b_n R_n-k_1,
\end{equation}
\begin{equation}\label{eq734g}
R_n-k_2 q^{-n}b_n-k_3-(1-q)\sum_{j=0}^nR_j=0,
\end{equation}
\begin{equation}\label{eq735g}
a_{n+1}^2R_{n+1}-a_n^2 R_{n-1}=-b_n(r_{n+1}-r_n),
\end{equation}
\begin{equation}\label{eq736g}
k_2 a_{n+1}^2-k_2 q^2 a_n^2=q^{1+n}-q^{2+n}-q^{2+n}r_n+q^{1+n}r_{n+1}.
\end{equation}

We will use these equations to find expressions for the recurrence
coefficients $a_n$, $b_n$ of the sequence of orthonormal polynomials
$\{p_n\}$  with respect to $w$. Multiplying (\ref{eq736g}) by
$q^{-2n-2}$ and taking a telescopic sum with $a_0=r_0=0$, we get
\begin{equation}\label{eq737g}
a_n^2= \frac{q^n(1-q^n+r_n)}{k_2}.
\end{equation}

Multiplying (\ref{eq735g}) by $R_n$, substituting $-b_n R_n$ from (\ref{eq733g}) and taking a telescopic sum, we obtain
\begin{equation}\label{eq738g}
a_n^2 R_n R_{n-1}=r_n(k_1 +r_n).
\end{equation}

On the other hand, if we substitute the expression for $b_n$ from
(\ref{eq734g}) and the expression for $a_n^2$ from (\ref{eq737g})
into (\ref{eq735g}) and we collect the terms in $r_n$ and $r_{n+1}$,
we obtain
\begin{multline*}r_{n+1}\left(qR_{n+1}+R_n-k_3-(1-q)\sum_{j=0}^nR_j\right)-r_n\left(qR_n+R_{n-1}-k_3-(1-q)\sum_{j=0}^{n-1}R_j\right)\\
=(1-q^n)R_{n-1}-q\left(1-q^{n+1}\right)R_{n+1}.\end{multline*} The
left hand side of this expression can easily be summed
telescopically and in the resulting equation we recognize the
expression for $b_n$ from (\ref{eq734g}), so we get
$$r_n\left(R_{n-1}+k_2b_nq^{-n}\right)=q^{n+1}R_n+q^nR_{n-1}-R_{n-1}-k_2b_nq^{-n}-k_3.$$
If we multiply this by $R_n$, we can use expressions (\ref{eq733g})
to substitute $b_nR_n$ and (\ref{eq737g}), (\ref{eq738g}) to
substitute $R_nR_{n-1}$.  Eventually we get a quadratic equation for
$R_n$:
\begin{equation}\label{eq7310g}
R_n^2-k_3q^{-n-1}R_n=-k_2q^{-2n-1}\left((1+r_n)(1+r_{n+1})-(1-k_1)\right).
\end{equation}

From (\ref{eq737g})  and (\ref{eq738g}) we get
\begin{equation}\label{eq738gnew}
q^{n}(1-q^n+r_n) R_n R_{n-1}=k_2r_n(r_n+k_1).
\end{equation}

We can use equations (\ref{eq7310g}) and (\ref{eq738gnew}) to get a second order second degree difference equation for $r_n$.
  In order to derive such an equation (which we omit here explicitly as it is long and cumbersome) we can first eliminate $R_n$ between equations (\ref{eq7310g})
 and (\ref{eq738gnew}) and then eliminate $R_{n-1}$ between the obtained equation and (\ref{eq7310g}) with $n$ replaced by $n-1$.

Alternatively, we can first   replace equation
(\ref{eq734g}) by subtracting from it (\ref{eq734g}) with $n-1$ to
eliminate the sum of $R_n$. We have the following equation instead
of (\ref{eq734g}):
\begin{equation}\label{eq734gnew}
k_2 q b_{n-1}-k_2 b_n+q^n(q R_n-R_{n-1})=0.
\end{equation}
 From (\ref{eq733g}) we can find $b_n$ in terms of $r_n$ and $R_n$ and substitute this expression into equations (\ref{eq734g})--(\ref{eq736g}).
 Further, we use (\ref{eq737g}).
Substituting (\ref{eq737g}) into (\ref{eq738g}), we can find $R_{n-1}$ and, by taking (\ref{eq737g}) and (\ref{eq738g}) with $n$ replaced by $n+1$,
we can find $R_{n+1}$ in terms of $R_n$ and $r_n$ ($r_{n+1}$ respectively). We have \begin{equation}\label{R{n-1}g}
R_{n-1}=-\frac{k_2q^{-n}r_n(r_n+k_1)}{(q^n-1-r_n)R_n},
\end{equation}
\begin{equation}\label{R{n+1}g}
R_{n+1}=-\frac{k_2 q^{-1-n}r_{n+1}(r_{n+1}+k_1)}{(q^{n+1}-1-r_{n+1})R_n}.
\end{equation}
Substituting these expressions into (\ref{eq734gnew}), we can find
$R_n^2$:
\begin{equation}\label{Rn2g}
R_n^2=-\frac{k_2q^{-1-n}r_n(r_n+k_1)(r_n(q^n-1-r_{n+1})+(q^n-1)(r_{n+1}+k_1))}
{(q^n-1-r_n)(r_{n-1}(q^n-1-r_n)+(q^n-1)(r_n+k_1))}.
\end{equation}

From (\ref{eq737g}) we have an expression of $a_n^2$ in terms of
$r_n.$ Using (\ref{Rn2g}) and (\ref{eq7310g}) we can get an
expression of $R_n$ in terms of $r_{n-1},\;r_n$ and $r_{n+1}$.
Hence, using (\ref{eq737g}) we can get an expression of $b_n$ in
terms of $r_{n-1},\;r_n$ and $r_{n+1}$. Thus, we have proved the
following theorem.

\begin{theorem}
The recurrence coefficients $a_n$, $b_n$ appearing in the three-term recurrence relation for the weight supported on the exponential lattice $\{ q^k \,|\,k\in\mathbb{N}_0\}$ with the potential   satisfying (\ref{pot gen}) and  conditions $w(0)=w(1/q)=0$ can be expressed in terms
of $r_n$ (by using (\ref{eq737g}), (\ref{eq733g}) with (\ref{Rn2g})), which is a solution of  a second order second degree discrete equation (with respect to $n$) given by
\begin{equation}\label{eq:thm1}f^2-2f g+g^2-k_3^2 q^{-2n-2}f=0,\end{equation} where $f$ denotes the right hand side of (\ref{Rn2g}) and $g$ is the right hand side of  (\ref{eq7310g}).
\end{theorem}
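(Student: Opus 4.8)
\emph{Proof proposal.} The plan is to eliminate $R_n$ between the two relations for it established above --- the quadratic (\ref{eq7310g}) and the rational expression (\ref{Rn2g}) for $R_n^2$ --- leaving a relation among $r_{n-1},r_n,r_{n+1}$ alone. Writing $g$ for the right-hand side of (\ref{eq7310g}) and $f$ for the right-hand side of (\ref{Rn2g}), equation (\ref{eq7310g}) reads $R_n^2-k_3q^{-n-1}R_n=g$ while (\ref{Rn2g}) reads $R_n^2=f$. Substituting the latter into the former gives $k_3q^{-n-1}R_n=f-g$, an explicit formula for $R_n$ (usable as such, together with (\ref{eq733g}), whenever $k_3\neq0$). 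Squaring this identity yields $k_3^2q^{-2n-2}R_n^2=(f-g)^2$, and substituting $R_n^2=f$ once more gives
\begin{equation*}
k_3^2q^{-2n-2}f=(f-g)^2=f^2-2fg+g^2,
\end{equation*}
which is (\ref{eq:thm1}) after transposition. This is the whole argument; it is short, the point being that the substitution $R_n^2=f$ removes both the linear $R_n$ term of (\ref{eq7310g}) and, after squaring, the remaining $R_n^2$.

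It then remains to read off the stated structure. The right-hand side $f$ of (\ref{Rn2g}) is a rational function of $r_{n-1},r_n,r_{n+1}$ --- the $r_{n-1}$-dependence coming through the expression for $R_{n-1}$ obtained from (\ref{eq737g}) and (\ref{eq738g}) --- while $g$ is polynomial in $r_n,r_{n+1}$; hence, after clearing denominators, (\ref{eq:thm1}) is a polynomial relation among three consecutive values $r_{n-1},r_n,r_{n+1}$, i.e.\ a second order difference equation, and inspecting the highest powers of $r_{n+1}$ (equivalently, by the $n\mapsto n+1$ symmetry of the construction, of $r_{n-1}$) shows it is of degree two, whence ``second order second degree''. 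Finally $a_n$ is determined by $r_n$ through (\ref{eq737g}), and $b_n$ through (\ref{eq733g}) rewritten as $b_nR_n=-(r_n+r_{n+1}+k_1)$ together with $R_n^2=f$ from (\ref{Rn2g}) (or $R_n=q^{n+1}(f-g)/k_3$ when $k_3\neq0$).

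The part requiring care --- and what I would flag as the main obstacle, such as it is --- is bookkeeping rather than mathematics: $R_n$ occurs only to the first power in (\ref{eq7310g}) but (\ref{eq738gnew})/(\ref{Rn2g}) controls only $R_n^2$, so the squaring step introduces a sign ambiguity for $R_n$ which one should check does not create spurious solution branches, and confirming the exact degree of the denominator-cleared form of (\ref{eq:thm1}) requires genuinely expanding $f$, which the authors deliberately suppress because it is long and cumbersome. Everything else reduces to the telescoping-and-substitution manipulations already carried out in deriving (\ref{eq737g}), (\ref{eq738g}) and (\ref{Rn2g}).
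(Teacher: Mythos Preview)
Your proposal is correct and is precisely the elimination the paper has in mind: the paper's proof consists of the derivations preceding the theorem, culminating in the remark that ``using (\ref{Rn2g}) and (\ref{eq7310g}) we can get an expression of $R_n$'', which is exactly your step $k_3q^{-n-1}R_n=f-g$, and (\ref{eq:thm1}) then follows by the square-and-substitute you describe. Your write-up simply makes explicit the two-line algebra the paper leaves to the reader, and your remarks on the sign ambiguity and on verifying the degree are apt caveats the paper does not spell out.
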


Note that the discrete equation in the theorem  factorizes if and only if $k_3=0$ and this allows us to express $r_{n+1}$ in terms of $r_{n-1}$ and $r_n$ as a rational function. The connection is given by a $q$-Painlev\'{e} equation:

\begin{theorem}\label{thm:qpV}
If $k_3=0$ in (\ref{pot gen}) then the variable $x_n=(1+r_n)(1-k_1)^{-1/2}$ satisfies $qP_V$ \cite{Ramani} given by
\begin{equation}\label{qP5}
(x_n x_{n-1}-1)(x_n x_{n+1}-1)=
\frac{\gamma\delta
q^{2n}(x_n-\alpha)(x_n-1/\alpha)(x_n-\beta)(x_n-1/\beta)}{(x_n-\gamma
q^n)(x_n-\delta q^n)},
 \end{equation}
with \[\alpha=\beta=\gamma=\delta=\frac{1}{p}\]  where $p=\sqrt{1-k_1}$.
The initial conditions are given by \[x_0=\frac{1}{p}\qquad{\rm and}\qquad x_1=p-\frac{k_2}{qp}\left(\frac{\mu_1}{\mu_0}\right)^2.\]
$x_n$ is related to the recurrence coefficients $a_n$ and $b_n$ of the orthogonal polynomials by \begin{equation}\label{aninThm2}a_n^2=\frac{q^n}{k_2}\left(px_n-q^n\right)\end{equation} and \begin{equation}\label{bninThm2}b_n^2=-\frac{q^{2n+1}(px_n+px_{n+1}-1-p^2)^2}{k_2p^2(x_nx_{n+1}-1)}.\end{equation}\end{theorem}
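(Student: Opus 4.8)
The plan is to inject the new variable directly into the relations already derived in the excerpt and watch everything collapse to (\ref{qP5}). Set $p=\sqrt{1-k_1}$ (so $k_1=1-p^2$), so that the substitution $x_n=(1+r_n)p^{-1}$ reads $r_n=px_n-1$. I would first record the elementary identities
\[
1+r_n=px_n,\qquad 1-q^n+r_n=px_n-q^n,\qquad r_n+k_1=p(x_n-p),
\]
\[
r_n(r_n+k_1)=p^2(x_n-\tfrac1p)(x_n-p),\qquad r_{n+1}+r_n+k_1=px_n+px_{n+1}-1-p^2,
\]
\[
(1+r_n)(1+r_{n+1})-(1-k_1)=p^2(x_nx_{n+1}-1).
\]
The expression (\ref{aninThm2}) for $a_n^2$ is then immediate from (\ref{eq737g}), since $q^n(1-q^n+r_n)/k_2=q^n(px_n-q^n)/k_2$.

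Next I would rewrite the two relations that survive the specialization $k_3=0$. Equation (\ref{eq7310g}) with $k_3=0$ becomes $R_n^2=-k_2p^2q^{-2n-1}(x_nx_{n+1}-1)$, and hence also $R_{n-1}^2=-k_2p^2q^{-2n+1}(x_{n-1}x_n-1)$; equation (\ref{eq738gnew}) --- which is just (\ref{eq737g}) combined with (\ref{eq738g}) --- becomes $q^n(px_n-q^n)R_nR_{n-1}=k_2p^2(x_n-\tfrac1p)(x_n-p)$. Squaring this last identity and eliminating $R_n^2$ and $R_{n-1}^2$ with the first two, the factors $k_2^2p^4$ cancel, the powers of $q$ combine, and there remains
\[
(x_nx_{n+1}-1)(x_{n-1}x_n-1)=\frac{q^{2n}(x_n-\tfrac1p)^2(x_n-p)^2}{(px_n-q^n)^2}.
\]
Writing $(px_n-q^n)^2=p^2(x_n-q^n/p)^2$, the right-hand side becomes exactly the right-hand side of (\ref{qP5}) with $\alpha=\beta=\gamma=\delta=1/p$: indeed $1/\alpha=1/\beta=p$, $\gamma q^n=\delta q^n=q^n/p$, and the prefactor $\gamma\delta=1/p^2$ is precisely what the extra $p^{-2}$ supplies. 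Thus $x_n$ solves $qP_V$. (The same equation drops out of (\ref{eq:thm1}) after setting $k_3=0$, which forces $f=g$, but passing through (\ref{eq738gnew}) avoids the bulk of (\ref{Rn2g}).)

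For $b_n$, I would take (\ref{eq733g}) in the form $b_nR_n=-(r_{n+1}+r_n+k_1)$, square it, and divide by the expression for $R_n^2$ above; using $r_{n+1}+r_n+k_1=px_n+px_{n+1}-1-p^2$ this is exactly (\ref{bninThm2}). For the initial conditions, the convention $a_0=r_0=0$ used in the excerpt gives $x_0=(1+r_0)p^{-1}=1/p$. Setting $n=0$ in (\ref{bninThm2}) and using $x_0=1/p$ collapses the right-hand side (one factor $x_1-p$ cancels) to $b_0^2=-qp(x_1-p)/k_2$, so $x_1=p-k_2b_0^2/(qp)$; since $p_0=\mu_0^{-1/2}$ one has $b_0=\int_0^1 xp_0^2w\,d_qx=\mu_1/\mu_0$ with $\mu_0=\int_0^1 w\,d_qx$ and $\mu_1=\int_0^1 xw\,d_qx$, which is the stated value of $x_1$.

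Almost all of this is bookkeeping; the one step that needs a bit of foresight is the derivation of (\ref{qP5}). One must notice that squaring (\ref{eq738gnew}) and removing $R_n^2,R_{n-1}^2$ via (\ref{eq7310g}) --- rather than wrestling with the unwieldy (\ref{Rn2g}) --- is what produces a genuine Painlev\'e-type recurrence, and that the slightly hidden factorization $(px_n-q^n)^2=p^2(x_n-q^n/p)^2$ is what pins down $\gamma=\delta=1/p$ together with the prefactor $\gamma\delta=1/p^2$ in the standard form (\ref{qP5}).
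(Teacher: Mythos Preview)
Your argument is correct, and in fact cleaner than the paper's. The paper derives (\ref{qP5}) by setting $k_3=0$ in (\ref{eq:thm1}), i.e.\ equating the complicated right-hand side of (\ref{Rn2g}) with that of (\ref{eq7310g}), and then massaging the resulting polynomial identity in $y_n=1+r_n$ into Painlev\'e form; similarly, for (\ref{bninThm2}) it substitutes $R_n^2$ from (\ref{Rn2g}) and only afterwards removes $x_{n-1}$ via (\ref{qP5}). You bypass (\ref{Rn2g}) entirely: squaring (\ref{eq738gnew}) and eliminating $R_n^2,R_{n-1}^2$ through (\ref{eq7310g}) yields (\ref{qP5}) in one stroke, and the same use of (\ref{eq7310g}) gives (\ref{bninThm2}) directly without any $x_{n-1}$ ever appearing. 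For $x_1$, the paper instead computes $r_1$ from (\ref{eq733g}) together with $R_0=k_2b_0/q$ (obtained from (\ref{eq734g}) at $n=0$), whereas you read $x_1$ off from (\ref{bninThm2}) at $n=0$; both give the same value. Your route trades the algebraic bookkeeping of (\ref{Rn2g}) for the single observation that (\ref{eq738gnew}) squared closes against two shifted copies of (\ref{eq7310g}), which is the more transparent mechanism.
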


\begin{proof}
If $k_3=0$ then (\ref{eq:thm1}) simplifies to $f-g=0$.  Putting $y_n=1+r_n$ and isolating the term containing $y_ny_{n+1}$, we find
\begin{eqnarray*}0&=&y_ny_{n+1}(y_n-q^n)^2(y_ny_{n-1}+k_1-1)=(k_1-1)(y_n-q^n)^2y_ny_{n-1}\\
&&+(q^n-y_n)(k_1-1)y_n\left[(y_n-q^n)+(q^n-1)(y_n-1+k_1)\right]\\
&&-q^n(y_n-1)(y_n-1+k_1)y_n\left[q^n(y_n-1)+(q^n-1)(k_1-1)\right].\end{eqnarray*}
With the substitution $y_n=\sqrt{1-k_1}x_n=px_n$ the first two terms can be completed to contain $(x_nx_{n-1}-1)(x_nx_{n+1}-1)$ and the equation simplifies to (\ref{qP5}).

Initial conditions for $a_n$ and $b_n$ (and, hence, $x_n$) can be computed by using the fact that the recurrence coefficients can be expressed in terms of the Hankel determinants containing the moments of the orthogonality measure \cite[Th. 4.2, p. 19; Ex. 3.1, p. 17]{Chihara}.  In particular, we will need that $b_0=\mu_1/\mu_0$ where $\mu_k$ is the $k$'th moment of the weight $w$.  Since $a_0=r_0=0$, we immediately find that $y_0=1$ and hence $x_0=1/p$.
As for $x_1$, we know from (\ref{eq733g}) that $r_1=-b_0R_0-k_1$.  Here, $R_0$ can be obtained from (\ref{eq734g}) with $n=0$ and $k_3=0$: we find that $R_0=q^{-1}k_2b_0$.  This leads to \[r_1=-k_1-\left(\frac{\mu_1}{\mu_0}\right)^2\frac{k_2}{q}\] and hence \[x_1=p-\frac{k_2}{pq}\left(\frac{\mu_1}{\mu_0}\right)^2.\]

The connection (\ref{aninThm2}) between $x_n$ and $a_n$ follows immediately from (\ref{eq737g}).  To obtain (\ref{bninThm2}), we use the squared (\ref{eq733g}) to write \[b_n^2=\frac{(r_{n+1}+r_n+k_1)^2}{R_n^2},\] where $R_n^2$ can be substituted using (\ref{Rn2g}).  Finally, substitute $r$ by $x$ using $x_n=(1+r_n)/p$ and eliminate $x_{n-1}$ using the $q$-Painlev\'{e} equation (\ref{qP5}).

\end{proof}

Clearly (in the case $k_3\neq 0$), we can also  derive a third order  difference equation for $r_{n}$ from
the system (\ref{eq733g})--(\ref{eq736g}) as follows. We can  get $R_{n+1}^2$ by either using relation (\ref{Rn2g}) with $n+1$ or  by squaring  (\ref{R{n+1}g}) and
using (\ref{Rn2g}). Hence, we can set them equal and in the result, we get a (cumbersome) expression involving only $r_{n-1},\,r_n,\,r_{n+1},\,r_{n+2}$.

To find out which weights can give rise to a potential of the form
(\ref{pot gen}), we notice that it is sufficient if
$$\frac{w(x/q)}{w(x)}=Ax^2+Bx+C$$ for certain constants $A$, $B$,
$C$, since an easy calculation shows that in that case the potential
is given by (\ref{pot gen}) with $k_1=1-C$, $k_2=-Aq$ and $k_3=-Bq$.

If we define \begin{eqnarray*}v_1^\alpha(x)=x^\alpha,\quad v_2^c(x)&=&(cx;q)_\infty,\quad
v_3^c(x)=(cx^2;q^2)_\infty,\\
v_4^c(x)&=&(c/x;q)_\infty,\quad
v_5^c(x)=(c/x^2;q^2)_\infty,\end{eqnarray*} then 
\begin{eqnarray*}
\frac{v_1^\alpha(x/q)}{v_1^\alpha(x)}=q^{-\alpha},\quad
\frac{v_2^c(x/q)}{v_2^c(x)}&=&\frac{q-cx}{q},\quad
\frac{v_3^c(x/q)}{v_3^c(x)}=\frac{q^2-cx^2}{q^2},\\
\frac{v_4^c(x/q)}{v_4^c(x)}&=&\frac{x}{x-c},\quad
\frac{v_5^c(x/q)}{v_5^c(x)}=\frac{x^2}{x^2-c}.\end{eqnarray*}
Hence it is clear which products of $v_i$ lead to a weight for which
the potential satisfies (\ref{pot gen}).  These include, among
others, the little $q$-Laguerre weight, the weight in
\cite[Sect.~7.3]{Lies}, products of rational functions and the weights above, the  weights in the following examples and others.

{\bf Example 1.}  In this example we consider the semi-classical little  $q$-Laguerre weight
\begin{equation}\label{weight}
w(x)=x^{\alpha} (qx;q)_{\infty} (c q x;q)_{\infty},\;\;\alpha>0,
\end{equation}
on the positive exponential lattice $\{ q^n\,|\, n\in\mathbb{N}_0
\}.$  The case
$c=-1$ was considered in \cite[Sect.~7.3]{Lies}.  The case $c=0$
gives the little $q$-Laguerre weight (and, hence, the recurrence coefficients are known explicitly).  We observe that
$w(0)=w(1/q)=0.$

The potential (\ref{potential}) is given by
$$u(x)=\frac{1}{1-q}\left(\frac{q}{x}-\frac{q^{1-\alpha}}{x}+q^{1-\alpha}(1+c)-c q^{1-\alpha}x\right)$$
and, hence, $k_1=1-q^{-\alpha},\;\;k_2=-cq^{1-\alpha},\;\;k_3=(1+c)q^{1-\alpha}$ in (\ref{pot gen}). We assume that $c\neq 0.$

Since $k_3=0$ if and only if $c=-1$, we get that in this case the variable $x_n=q^{\alpha/2}(r_n+1)$ satisfies
\begin{equation}\label{qP5part}
(x_n x_{n-1}-1)(x_n x_{n+1}-1)=\frac{q^{2n+\alpha}(x_n-q^{\alpha/2})^2(x_n-q^{-\alpha/2})^2}{(x_n-q^{n+\alpha/2})^2},
\end{equation}
 which is a particular case of $qP_V$ (\ref{qP5})
(with $\alpha=\beta=\gamma=\delta=q^{\alpha/2}$). This coincides with the result in \cite[Sect.~7.3]{Lies}. Note that equation (\ref{qP5part}) for the variable $r_n$ is given by
\begin{equation}\label{eq:rn}
r_n^2(r_n+1-q^{-\alpha})^2=q^{-2n}(r_n+1-q^n)^2((r_n+1) (r_{n+1}+1)-q^{-\alpha})((r_n+1) (r_{n-1}+1)-q^{-\alpha}).
\end{equation}

Next we study initial conditions for the recurrence coefficients of the weight (\ref{weight}) for general $c$.
 Using  (\ref{eq737g}), we have $$
a_n^2=\frac{1}{c} \,
q^{n-1+\alpha/2}\left(q^{n+\alpha/2}-x_n\right).
$$
To find an expression of $b_n$ in terms of $x_n$ we square the
expression of $b_n$ from (\ref{eq733g}) and substitute (\ref{Rn2g}).
We can also use (\ref{eq:rn}) to get rid of $x_{n-1}$. We get for
$n\geq 1$ $$ c b_n^2(x_n x_{n+1}
-1)=q^{2n}\left(1+q^{\alpha}-q^{\alpha/2}(x_n+x_{n+1})\right)^2.
$$ We  also get  that $$
b_0^2=\frac{1}{c}\left(q^{\alpha/2}x_1-1\right).
$$ Recalling that $b_0=\mu_1/\mu_0$, we immediately get that the initial values are given by
$x_0=q^{\alpha/2}$ (since $r_0=0$) and
\begin{equation}\label{x1}
x_1=q^{-\alpha/2}\left(1+c\frac{\mu_1^2}{\mu_0^2}\right),
\end{equation}
 where $\mu_k$ is the $k$-th moment of the weight (\ref{weight}). In fact, we can also calculate $\mu_k$ by definition
and get $$\mu_k=(1-q)(q;q)_{\infty}(cq;q)_{\infty} \,
{}_2\phi_1(0,0;cq;q;q^{\alpha+k+1}),$$ where the basic
hypergeometric function ${}_2\phi_1 $ is given by \cite[Sect. 0.2,
0.4]{KS}
$${}_2\phi_1(a_1,a_2;b_1;q;z)=\sum_{\ell=0}^{\infty}\frac{(a_1;q)_\ell (a_2;q)_\ell}{(b_1;q)_\ell}\frac{z^\ell}{(q;q)_\ell}.$$
Note that for $c=q^{\nu}$ the last expression (up to a factor) can
be written in terms of the modified $q$-Bessel function \cite{Rogov}
$$I^{(1)}_{\nu}(z,q)=\frac{(q^{\nu+1};q)_{\infty}}{(q;q)_{\infty}}(z/2)^{\nu}{}_2\phi_1(0,0;q^{\nu+1};q;z^2/4)$$ with $z=2q^{(\alpha+k+1)/2}.$
Also note that a limiting case of Heine's transformation formula \cite[formula (0.6.9)]{KS} allows us to write this $q$-hypergeometric function as a ${}_0\phi_1$-function.

{\bf Example 2.}  In this example we consider another semi-classical generalization of the little $q$-Laguerre weight:
\begin{equation}\label{weight2}
w(x)=x^{\alpha} \frac{(qx;q)_{\infty} \left(\frac{c_1}{x};q\right)_\infty\left(\frac{qx}{c_1};q\right)_\infty}{\left(\frac{c_2}{x};q\right)_\infty},\;\;\alpha>0,\:c_1<0,\:c_2<0
\end{equation}
on the positive exponential lattice $\{ q^n\,|\, n\in\mathbb{N}_0
\}.$  The case where $c_1=c_2=1/c$ gives the weight from the previous example.  Again, it is clear that $w(0)=w(1/q)=0$.  It is easy to calculate that for this weight we get \[k_1=1-\frac{c_2}{c_1}q^{-\alpha},\qquad k_2=-\frac{q^{1-\alpha}}{c_1}\qquad{\rm and}\qquad k_3=\frac{c_2+1}{c_1}q^{1-\alpha}.\]

As mentioned earlier, to obtain a Painlev\'e equation, we need that $k_3=0$, hence $c_2=-1$.  So, following the outline given before, we see that \[x_n=\sqrt{-c_1}q^{\alpha/2}(r_n+1)\qquad{\rm and}\qquad a_n^2=-c_1q^{n+\alpha-1}(1-q^n+r_n)\] where $x_n$ satisfies (\ref{qP5}) with \[\alpha=\beta=\gamma=\delta=\sqrt{-c_1}q^{\frac{\alpha}{2}}.\]

As for the initial conditions, we find that \[x_0=\sqrt{-c_1}q^{\frac{\alpha}{2}}\] and \[x_1=\sqrt{-c_1}q^{\frac{\alpha}{2}}\left(-1+\frac{c_2}{c_1}q^{-\alpha}+\frac{b_0^2}{c_1q^\alpha}\right)+1.\]  Moreover, $b_0=\mu_1/\mu_0$, and it is easily seen that the $k$'th moment of this weight is given by \[\mu_k=(1-q)(q;q)_\infty\left(\frac{q}{c_1};q\right)_\infty\frac{(c_1;q)_\infty}{(c_2;q)_\infty}{}_2\phi_1\left(0,0;\frac{q}{c_2};q;\frac{c_1}{c_2}q^{\alpha+1+k}\right).\]  Hence $b_0$ can be written, up to a factor, as a fraction of two modified $q$-Bessel functions with $q^{-\nu}=c_2$.  Again, the $q$-hypergeometric function can alternatively be written as a ${}_0\phi_1$-function, using \cite[formula (0.6.9)]{KS}.

\section{Discussion}

In this paper we have shown that it is possible to study simultaneously recurrence coefficients in the three-term recurrence relation for  a large class of weights by using the technique of  ladder operators. The crucial point is to consider the potential (\ref{pot gen}) with parameters. This allows us to obtain a second degree second order discrete equation, which in some particular cases, can be further reduced to the  discrete Painlev\'e equation. It is an interesting open problem to try to classify the weights which lead to the appearance of the discrete Painlev\'e equations for the recurrence coefficients.

\section*{Acknowledgements}
GF is supported by MNiSzW Iuventus Plus Grant Nr 0124/IP3/2011/71. Part of this research was done when CS was visiting the University
of Warsaw, and he is grateful for their hospitality.


\begin{thebibliography}{12}
\markboth{Galina Filipuk and Christophe Smet}{On the recurrence coefficients of the generalized little $q$-Laguerre polynomials}


\bibitem[1]{Lies} L. Boelen, {\em  Discrete Painlev\'e Equations and Orthogonal Polynomials}, Ph.D. diss., Department of Mathematics  K.U.Leuven, Leuven, Belgium,  2010.

\bibitem[2]{BFV} L. Boelen, G. Filipuk, and W. Van Assche, {\em  Recurrence coefficients of generalized Meixner polynomials and Painlev\'e equations}, J. Phys. A: Math. Theor.   44 (2011), 035202 (19p).

\bibitem[3]{BV} L. Boelen and W. Van Assche,  {\em   Discrete Painlev\'e equations for recurrence coefficients of semiclassical Laguerre polynomials},   Proc. Amer. Math. Soc.    138 (2010), pp.  1317--1331.



\bibitem[4]{Chihara} T.S. Chihara, {\em  An Introduction to Orthogonal Polynomials},   Gordon and Breach, New York, 1978.


\bibitem[5]{Clarkson} P. A. Clarkson, {\em  Painlev\'e equations---nonlinear special functions},
Orthogonal Polynomials and Special Functions: Computation and
Applications, Lect. Notes in Math. {\bf1883}, Springer, Berlin, 2006, pp.  331--411.



\bibitem[6]{GF+Walter2} G. Filipuk and W. Van Assche, {\em  Recurrence coefficients of generalized Charlier polynomials and the fifth Painlev\'e equation}, Proc. Amer. Math. Soc. 141 (2013), 551--562.


\bibitem[7]{Freud} G. Freud, {\em On the coefficients in the recursion formulae of orthogonal
polynomials},   Proc. Roy. Irish Acad. Sect. A {\bf 76}   (1976),
1--6.

\bibitem[8]{GR} G. Gasper and M. Rahman, {\em Basic Hypergeometric Series}, Encyclopedia of Mathematics and its Applications {\bf 35}, Cambridge University Press, 1990.

\bibitem[9]{Ramani} B. Grammaticos and A. Ramani, {\em On a novel $q$-discrete analogue of the Painlev\'e VI equation},
Phys. Lett. A {\bf 257} (1999), 288-292.


\bibitem[10]{Ismail} M.E.H.  Ismail, {\em  Classical and Quantum Orthogonal Polynomials in One Variable},
Encyclopedia of Mathematics and its Applications   98, Cambridge University Press, 2005.



\bibitem[11]{Ismailq} M.E.H. Ismail, {\em Difference equations and quantized discriminants for $q$-orthogonal polynomials},   Adv. Appl. Math. {\bf 30} (2003), 562--589.

\bibitem[12]{KS} R. Koekoek and R.F. Swarttouw, {\em The Askey-scheme of hypergeometric orthogonal polynomials and its $q$-analogue}, Report no. 98-17 (1998), Delft University of Technology,
Faculty of Information Technology and Systems,
Department of Technical Mathematics and Informatics. Available at http://aw.twi.tudelft.nl/$\sim$koekoek/documents/as98.pdf.




\bibitem[13]{Magnus} A.P. Magnus, {\it{  Freud's equations for
orthogonal polynomials as discrete Painlev\'e equations}}, in {\it
Symmetries and Integrability of Difference Equations},  
London Math. Soc. Lecture Note Ser.    255, Cambridge University
Press, 1999,  pp.  228--243.

\bibitem[14]{Magnus2} A. P. Magnus, {\em Painlev\'e type differential
equations for the recurrence coefficients of semi-classical
orthogonal polynomials},   J. Comput. Appl. Math. {\bf 57}
(1995),  215--237.

\bibitem[15]{Rogov} M. A. Olshanetsky and V.-B. K. Rogov, {\it{Modified $q$-Bessel functions and $q$-Macdonald functions}}, Mat. Sb. {\bf 187} 10 (1996) 109--128.

\bibitem[16]{VanAssche} W. Van Assche,  {\em  Discrete Painlev\'e equations for recurrence coefficients of orthogonal polynomials}, in  Discrete Equations, Special Functions and Orthogonal
Polynomials, 
 World Scientific, 2007, pp. 687--725.

\end{thebibliography}
\end{document}